\documentclass[11pt]{amsart}

\usepackage{amsmath}
\usepackage{amssymb}
\usepackage{graphicx}
\usepackage{color}
\usepackage{url}

\textwidth=13.5cm
\hoffset=-0.25cm
\pagestyle{headings}

\newtheorem{thm}{Theorem}

\newtheorem{que}[thm]{Question}
\newtheorem{prop}[thm]{Proposition}
\newtheorem{lem}[thm]{Lemma}
\newtheorem{remark}[thm]{Remark}

\newtheorem{ex}[thm]{Example}

\newcommand{\R}{\mathbf{R}}

\newcommand{\Ss}{\mathbf{S}} 

\begin{document}
\title{Minimal dilatation in Penner's construction}
\author{Livio Liechti}
\thanks{The author is supported by the Swiss National Science Foundation (project no.\ 159208)}
\address{Mathematisches Institut der Universit\"at Bern, Silderstrasse~5, CH-3012 Bern, Switzerland}
\email{livio.liechti@math.unibe.ch}

\begin{abstract} 
For all orientable closed surfaces, we determine the minimal dilatation among mapping classes arising from Penner's construction.
We also discuss generalisations to surfaces with punctures.
\end{abstract}
\maketitle

\section{Introduction}
Thurston's famous classification states that an irreducible mapping class is either periodic or pseudo-Anosov~\cite{Th}.
A \emph{mapping class} $\phi$ is a diffeomorphism of a surface of finite type, up to isotopy relative to the boundary. 
It is \emph{periodic} if it has a periodic representative and \emph{pseudo-Anosov} if it has a pseudo Anosov representative, 
i.e.\ there exist two invariant transverse measured foliations such that this representative stretches one of them by some real number $\lambda>1$ and the other by $\lambda^{-1}$. 
The number $\lambda$ is the \emph{dilatation} of a pseudo-Anosov mapping class $\phi$.

In this article, we consider dilatations arising from Penner's construction of pseudo-Anosov mapping classes~\cite{Pe}.
By a result of Leininger, the dilatation of any such mapping class is bounded from below by $\sqrt{5}$, see the appendix of~\cite{Le}. 
However, Leininger states that this bound is not sharp.
For every orientable closed surface, 
we give the optimal lower bound 
and determine a pseudo-Anosov mapping class 
arising via Penner's construction realising it.

\begin{thm} 
\label{minimaldilatation}
On an orientable closed surface $\Sigma_g$ of genus $g\ge1$, the minimal dilatation among mapping classes arising from Penner's construction is 
$$\lambda_g = 2 - \text{\emph{cos}}\left(\frac{2g-1}{2g+1}\pi\right) + \sqrt{3 -4\text{\emph{cos}}\left(\frac{2g-1}{2g+1}\pi\right) +\text{\emph{cos}}^2\left(\frac{2g-1}{2g+1}\pi\right)}.$$
Furthermore, the dilatation $\lambda_g$ is realised by the Coxeter mapping class associated to the Coxeter graph $(A_{2g},\pm)$ with alternating signs.
\end{thm}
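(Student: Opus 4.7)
The plan is to combine Penner's explicit description of the dilatation as a Perron--Frobenius eigenvalue with Euler-characteristic constraints on filling pairs of multicurves on $\Sigma_g$. If $A=\{a_1,\ldots,a_p\}$ and $B=\{b_1,\ldots,b_q\}$ fill $\Sigma_g$ and $\phi$ is a Penner word in which each Dehn twist appears exactly once, then the dilatation of $\phi$ equals the Perron eigenvalue of a nonnegative matrix built from the intersection matrix $N$ with $N_{ij}=i(a_i,b_j)$. Its spectrum is governed by the singular values of $N$: if $\mu$ is the largest such, then $\lambda$ satisfies $\lambda^2-(\mu^2+2)\lambda+1=0$. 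Since repeating twists or enlarging intersection numbers only increases this Perron eigenvalue, the minimisation reduces to finding the filling Penner pair on $\Sigma_g$ that minimises $\mu$.

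To bound the combinatorial complexity from below, I would invoke an Euler-characteristic argument. A filling pair on $\Sigma_g$ with $e$ transverse intersection points produces a cell decomposition of $\Sigma_g$ with $e$ vertices and $2e$ edges, whose faces must all be disks. Hence $F=2-2g+e\geq 1$, giving $e\geq 2g-1$, with equality forcing a single complementary disk. Combined with the bipartiteness of Penner pairs and the fact that every curve in the pair must carry at least one intersection, a short combinatorial case analysis shows that every minimal-intersection filling Penner pair on $\Sigma_g$ is a chain $c_1,\ldots,c_{2g}$ with $i(c_i,c_{i+1})=1$ bipartitioned by parity, i.e.\ the Coxeter graph $(A_{2g},\pm)$ with alternating signs.

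For the $A_{2g}$ chain the bipartite intersection graph is the path $P_{2g}$, whose adjacency spectral radius is $\mu=2\cos(\pi/(2g+1))$. Substituting into the quadratic above and using $\cos((2g-1)\pi/(2g+1))=-\cos(2\pi/(2g+1))=1-\mu^{2}/2$, a direct manipulation yields the announced trigonometric formula for $\lambda_g$.

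The hard part will be ruling out Penner pairs with more than $2g$ curves or with more than $2g-1$ intersections. One must show that neither enlarging the intersection pattern nor extending the Penner word by extra curves (each possibly intersecting only a few of the existing ones) can produce a smaller Perron eigenvalue. This reduces to a Perron--Frobenius monotonicity argument, coupled with the classical fact that among connected graphs on $n$ vertices the path $P_n$ uniquely minimises the adjacency spectral radius; together these pin down the $A_{2g}$ chain as the unique minimiser and complete the proof.
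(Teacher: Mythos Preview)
Your overall strategy---reduce to minimising the top singular value $\mu$ of the bipartite intersection matrix, then pin down $A_{2g}$ via Euler characteristic and the extremality of paths---is attractive, but there are two genuine gaps that prevent it from going through as written.

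\medskip
\textbf{Ordering of the twists.} The identity $\lambda^2-(\mu^2+2)\lambda+1=0$ is valid only for the \emph{bipartite} product $T_{\alpha_1}^+\cdots T_{\alpha_n}^+T_{\beta_1}^-\cdots T_{\beta_m}^-$, since that is precisely when $M_\phi=\begin{pmatrix}I&N\\0&I\end{pmatrix}\begin{pmatrix}I&0\\N^\top&I\end{pmatrix}$. A general Penner word, even with each twist appearing once, gives a different nonnegative product whose spectrum is \emph{not} controlled by $\mu$ alone. You therefore have not reduced the problem to ``minimise $\mu$''. The paper circumvents this: it never claims the bipartite order is minimal among all orders, but instead uses the entrywise inequality $M_\phi\ge M_\Gamma$ for a \emph{tree} subpattern $\Gamma$ (affine Dynkin, or ultimately $A_{2g}$), exploiting that for trees the Coxeter element is unique up to conjugacy so the ordering is irrelevant there.

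\medskip
\textbf{Monotonicity versus filling.} Your sentence ``enlarging intersection numbers only increases this Perron eigenvalue'' is true but does not justify restricting to single intersections: lowering an intersection number from $2$ to $1$ may destroy the filling property, so the reduced pair need not live on $\Sigma_g$ at all. The paper handles this by proving directly that a single pair $(\alpha_i,\beta_j)$ with $i(\alpha_i,\beta_j)\ge2$ already forces dilatation $\ge 3+2\sqrt{2}>\lambda_g$, which kills multi-intersections outright. Your sketch lacks an analogue of this step.

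\medskip
\textbf{A smaller point.} Even granting simple intersections and bipartite order, ``$P_n$ minimises the spectral radius among connected graphs on $n$ vertices'' does not by itself force $n\ge 2g$. A dense bipartite pattern on few curves can still fill $\Sigma_g$ (you only know $e\ge 2g-1$, not $n\ge 2g$). One needs the extra observation that if $n\le 2g-1$ then $\rho(G)\ge 2e/n\ge 2>2\cos\!\big(\tfrac{\pi}{2g+1}\big)$, which you do not supply. The paper sidesteps this entirely by first cutting away everything with dilatation $\ge 3+2\sqrt{2}$ (multi-intersections, patterns containing an affine $\widetilde D_n,\widetilde E_6,\widetilde E_7,\widetilde E_8$, or a $4$-cycle), leaving only $A_n$, $D_n$, $E_{6,7,8}$, even cycles, and one enriched $6$-cycle; it then shows by explicit filling computations that each of these, when it fills $\Sigma_g$, contains $A_{2g}$ as a subtree (or is handled by a short finite check for $g\le4$).
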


Recent results deal with Galois conjugates of dilatations arising from Penner's construction. For example, they lie dense in the complex plane by a theorem of Strenner~\cite{Str}.
On the other hand, Shin and Strenner showed that they cannot lie on the unit circle and used their result to disprove Penner's conjecture that every pseudo-Anosov mapping class has a power arising via his construction~\cite{ShSt}.

\begin{remark} \emph{The sequence $\lambda_g$ of minimal dilatations among mapping classes arising via Penner's construction for $g\ge1$ is monotonically increasing in $g$. 
This follows directly from the formula given in Theorem~\ref{minimaldilatation}, since $-\text{cos}(\frac{2g-1}{2g+1}\pi)$ is monotonically increasing in $g\ge1$.
Thus, the minimum among all dilatations arising from Penner's construction is $$\lambda_1=\frac{3+\sqrt{5}}{2},$$ 
the square of the golden ratio, and is geometrically realised by the monodromy of the figure eight knot. 
Furthermore, since $-\text{cos}(\frac{2g-1}{2g+1}\pi)$ converges to $1$ as $g\to +\infty$, the sequence $\lambda_g$ 
converges to the limit $$\lim_{g\to\infty}\lambda_g = 3 + 2\sqrt{2},$$ 
the square of the silver ratio.
}\end{remark}

\begin{remark} \emph{One way to describe the minimising Coxeter mapping class $\phi_g$ with alternating-sign Coxeter graph $(A_{2g},\pm)$ is the following: 
Let $\Sigma_g$ be a closed surface of genus $g$. Let $\alpha$ and $\beta$ be two multicurves in $\Sigma_g$, 
with $g$ components each, that intersect with the pattern of the Dynkin tree $A_{2g}$.
For $g=3$, this is depicted in Figure~\ref{g3}. 
\begin{figure}[h]
\def\svgwidth{250pt}
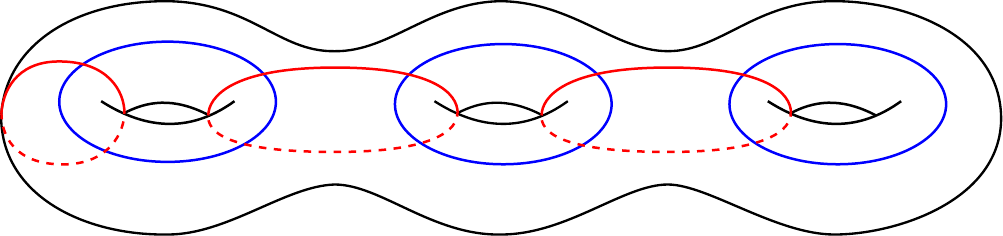
\caption{}
\label{g3}
\end{figure}
Then $\phi_{g}$ is the mapping class given by a negative Dehn twist along all components of $\beta$, 
followed by a positive Dehn twist along all components of $\alpha$. 
Manifestly, the mapping classes $\phi_g$ also arise via Thurston's construction~\cite{Th}.
The connection to Coxeter mapping classes will be made more precise in Section~\ref{Coxeter_section}.
}\end{remark}

\subsection{Penner's construction}
\label{Pennersection}
Let $\Sigma$ be an oriented surface of finite type and let $\gamma\subset\Sigma$ be a simple closed curve.
A \emph{Dehn twist along} $\gamma$ is a mapping class with 
a representative supported in an annular neighbourhood of $\gamma$, sending an arc crossing 
$\gamma$ to an arc crossing $\gamma$ but also winding around $\gamma$ once.
Dehn twists come in two flavours: \emph{positive} if the winding is in the counterclockwise sense and \emph{negative} if the winding is in the clockwise sense.
This does not depend on the orientation of $\gamma$ but only on the orientation of~$\Sigma$. 
We write $T_\gamma^+$ for the positive and $T_\gamma^-$ for the negative Dehn twist along $\gamma$, respectively.
A \emph{multicurve} $\gamma$ on $\Sigma$ is a disjoint union of simple closed curves $\gamma_i \subset \Sigma$.
We now describe Penner's construction. Let $\alpha=\alpha_1~\dot{\cup}~\cdots~\dot\cup~\alpha_n$ and 
$\beta=\beta_1~\dot\cup~\cdots~\dot\cup~\beta_m$ be two multicurves (without parallel components) which intersect minimally and whose union \emph{fills} $\Sigma$, 
i.e.\ the complement consists of discs and once-punctured discs.
Furthermore, let $\phi$ be a product of positive Dehn twists $T_{\alpha_i}^+$ and negative Dehn twists $T_{\beta_j}^-$ 
such that every component of the multicurves $\alpha$ and $\beta$ gets twisted along at least once. By a theorem of Penner, such a mapping class $\phi$ is pseudo-Anosov
and the dilatation of $\phi$ can be obtained as follows~\cite{Pe}. 
Let 
\begin{align*}
M_{\alpha_i} = I + R_{\alpha_i},\\
M_{\beta_j} = I + R_{\beta_j},
\end{align*}
where $I$ is the identity matrix of size $(n+m)\times(n+m)$. 
Furthermore, the matrices $R_{\alpha_i}$ and $R_{\beta_j}$ are obtained from the
geometric intersection matrix 
of the curves $\{\alpha_1,\dots,\alpha_n,\beta_1,\dots,\beta_m\}$,
$$\begin{pmatrix}
  0 & X\\
 X^{\top} & 0
 \end{pmatrix}\ge0,
$$ 
by setting all entries to zero which are not in the row corresponding to $\alpha_i$ or $\beta_j$, respectively.
Then the dilatation $\lambda$ of $\phi$ is the Perron-Frobenius eigenvalue of the matrix product $M_\phi\ge0$ corresponding to the product $\phi$ of Dehn twists~\cite{Pe}.

\subsection{Outline} As a first step in the proof of Theorem~\ref{minimaldilatation}, we recall the important notions concerning Coxeter mapping classes in Section~\ref{Coxeter_section}. 
This will provide us with examples of mapping classes arising via Penner's construction. In particular, we will calculate the dilatation of the Coxeter mapping class 
corresponding to the alternating-sign Coxeter graph $(A_{2g},\pm)$ in Proposition~\ref{A2gdilatation}. 
Having calculated these dilatations allows us to neglect all mapping classes for which we can deduce larger dilatation.  
Section~\ref{bounds_section} is devoted to this task: in Proposition~\ref{doubleintersection}, we show that we can disregard the case where two components of the multicurves 
used in Penner's construction intersect more than once, essentially reducing the problem to a question about alternating-sign Coxeter mapping classes. 
Using monotonicity of the spectral radius under Coxeter graph inclusion in Proposition~\ref{affinelemma}, we are able to exclude almost all Coxeter graphs that 
do not correspond to a finite Dynkin diagram. 
For pairs of multicurves that intersect with the pattern of a graph that we did not rule out, we study in Section~\ref{filling_section} which surfaces their union can fill.
This finally allows us to finish the proof of Theorem~\ref{minimaldilatation} by calculating the dilatations of very few small genus examples.
We round off in Section~\ref{punctures}, where we hint at generalisations to surfaces with punctures, discuss difficulties and ask about asymptotic behaviour.
\medskip

\noindent
{\bf Acknowledgements.} I would like to thank Sebastian Baader, Pierre Dehornoy and Eriko Hironaka for insightful discussions and suggestions. 
I would also like to thank Bal\'azs Strenner and the referee for helpful comments.

\section{Coxeter mapping classes}
\label{Coxeter_section}

Let $\Gamma$ be a finite connected graph without loops or double edges and let $\mathfrak{s}$ be an assignment of a sign $+$ or $-$ to every vertex $v_i$ of $\Gamma$. 
Such a pair $(\Gamma, \mathfrak{s})$ is called a \emph{mixed-sign Coxeter graph}, cf.~\cite{Hi}. 
To such a pair, we will associate certain products of reflections. 
Let $\R^{V_\Gamma}$ be the real vector space abstractly generated by the vertices $v_i$ of $\Gamma$. 
We equip $\R^{V_\Gamma}$ with the symmetric bilinear form $B$, given by $B(v_i,v_i)=-2\mathfrak{s}(v_i)$ and $B(v_i,v_j) = a_{ij}$, where 
$a_{ij}$ is the $ij$-th entry of the adjacency matrix $A(\Gamma)$ of $\Gamma$.
To every vertex $v_i$, we associate a reflection $s_i$ about the hyperplane in $\R^{V_\Gamma}$ perpendicular to $v_i$,
$$s_i(v_j) = v_j - 2\frac{B(v_i,v_j)}{B(v_i,v_i)}v_i.$$
A product of the $s_i$ containing every $s_i$ exactly once is called a \emph{Coxeter transformation} associated to the pair $(\Gamma,\mathfrak{s})$. 
For arbitrary graphs $\Gamma$, this product is highly non-unique. If $\Gamma$ is a tree, however, the Coxeter transformation is uniquely determined up to conjugation~\cite{Steinberg}.
This allows us to talk about ``the" Coxeter transformation associated to a pair $(\Gamma,\mathfrak{s})$ in case $\Gamma$ is a tree.
For our purposes, the most important examples of trees are the simply laced finite Dynkin diagrams, depicted in Figure~\ref{ADE}.

\begin{figure}[h]
\def\svgwidth{180pt}
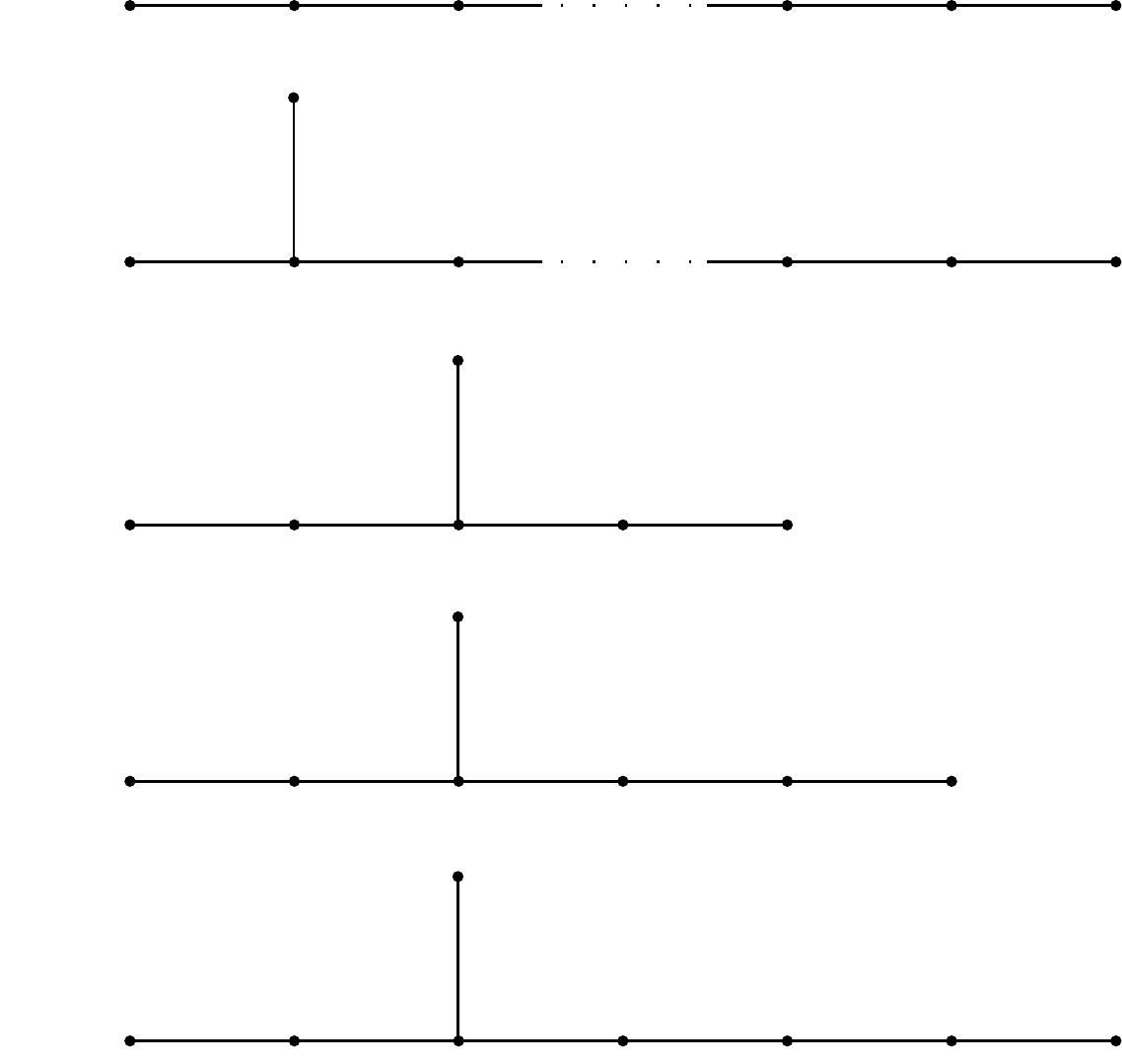
\caption{The Dynkin diagrams $A_n$, $D_n$, $E_6$, $E_7$ and $E_8$.}
\label{ADE}
\end{figure}

In the following, two kinds of sign assignments $\mathfrak{s}$ will be of interest to us: 
all signs $\mathfrak{s}$ positive, written $(\Gamma,+)$, which is the case we call \emph{classical}, 
and signs $\mathfrak{s}$ that give a bipartition of the (necessarily bipartite) graph $\Gamma$, written $(\Gamma,\pm)$, which is the case we call \emph{alternating-sign}.

In the case of classical Coxeter trees $(\Gamma,+)$, A'Campo realised the Coxeter transformation, up to a sign, as the homological action of a mapping class 
given by the product of two positive Dehn twists along multicurves that intersect each other with the pattern of $\Gamma$, see~\cite{AC2}. 
Similarly, in the case of alternating-sign Coxeter trees $(\Gamma,\pm)$, Hironaka and the author realised the Coxeter transformation, up to a sign, as the homological action of a mapping class 
given by the product of two Dehn twists of opposite signs along multicurves that intersect each other with the pattern of $\Gamma$, see~\cite{HiLi}.
We call these mapping classes \emph{Coxeter mapping classes}.
Two observations are of special interest for us. Firstly, the Coxeter mapping classes $\phi$ built by Hironaka and the author also arise via Penner's construction. 
Indeed, the union of the constructed multicurves fills the surface (which has boundary, along which we glue in discs to land in the setting we are considering) 
and components intersect at most once, hence their intersection is minimal.
Secondly, the given matrix describing the homological action of $\phi$ equals the corresponding matrix product $M_\phi$ of Penner's construction described in Section~\ref{Pennersection}, see~\cite{HiLi}.
We deduce that for such a mapping class $\phi$ realising the Coxeter transformation associated to $(\Gamma,\pm)$, the dilatation equals 
the spectral radius of the Coxeter transformation associated to $(\Gamma,\pm)$.
This will greatly simplify our calculations.

\begin{ex}\emph{
Let $(\Gamma,\pm)$ be the $4$-cycle graph with alternating signs, as depicted in Figure~\ref{4-cycle}. 
\begin{figure}[h]
\def\svgwidth{60pt}
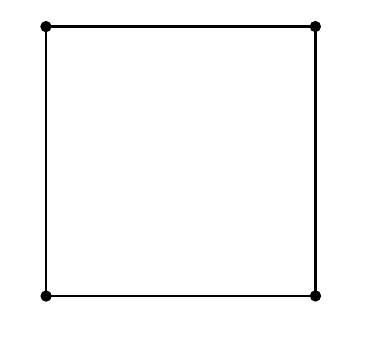
\caption{}
\label{4-cycle}
\end{figure}
There are, up to conjugation, two Coxeter transformations: 
the one corresponding to the bipartite order, $s_1s_3s_2s_4$, and the one corresponding to the cyclic order, $s_1s_2s_3s_4$. 
Two quick calculations confirm that their spectral radii are equal to and greater than $3+2\sqrt{2}$, respectively.
}\end{ex}

\begin{prop}
\label{A2gdilatation}
The Coxeter mapping class $\phi_g$ realising the Coxeter transformation of the alternating-sign Coxeter tree $(A_{2g},\pm)$ 
has dilatation 
$$2 - \text{\emph{cos}}\left(\frac{2g-1}{2g+1}\pi\right) + \sqrt{3 -4\text{\emph{cos}}\left(\frac{2g-1}{2g+1}\pi\right) +\text{\emph{cos}}^2\left(\frac{2g-1}{2g+1}\pi\right)}<3+2\sqrt{2}.$$
\end{prop}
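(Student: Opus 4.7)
The plan is to exploit the bipartite alternating-sign structure of $(A_{2g},\pm)$ in order to reduce the spectral radius of the Coxeter transformation to an eigenvalue problem for the adjacency matrix of the path $A_{2g}$. By the discussion preceding the proposition, the dilatation of $\phi_g$ equals this spectral radius, so diagonalising the Coxeter transformation is the only task.

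First I would label the vertices $v_1,\dots,v_{2g}$ of $A_{2g}$ along the path with $\mathfrak{s}(v_i)=(-1)^{i+1}$, so that the signs realise the bipartition. Since two vertices of the same sign are never adjacent, the reflections attached to vertices of the same colour commute pairwise, and the Coxeter transformation factors as $c = c_+c_-$, where $c_\pm$ is the product of the reflections attached to the vertices of sign $\pm$. Splitting $\R^{V_\Gamma} = W_+\oplus W_-$ according to the bipartition and letting $N$ denote the $g\times g$ matrix recording adjacencies between $W_+$ and $W_-$, the reflection formula of Section~\ref{Coxeter_section} yields the block representations
$$c_+ = \begin{pmatrix} -I & N \\ 0 & I \end{pmatrix},\qquad c_- = \begin{pmatrix} I & 0 \\ -N^{\top} & -I \end{pmatrix},$$
and hence a closed block form for $c$.

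Next I would extract the spectrum from this block form. Eliminating the second component in the eigenvalue equation $cv=\lambda v$ shows that a nonzero $\lambda\neq -1$ is an eigenvalue of $c$ if and only if some eigenvalue $\mu^2$ of $NN^{\top}$ satisfies the quadratic
$$\lambda^2 + (2+\mu^2)\lambda + 1 = 0.$$
Because $A_{2g}$ is bipartite, the eigenvalues of $NN^{\top}$ are precisely the squares of the nonnegative eigenvalues of the adjacency matrix of $A_{2g}$, namely $4\cos^2(k\pi/(2g+1))$ for $k=1,\dots,g$. The spectral radius of $c$ therefore comes from $\mu_{\max}^2 = 4\cos^2(\pi/(2g+1)) = 2 - 2\cos((2g-1)\pi/(2g+1))$; taking the larger-modulus root of the corresponding quadratic and substituting this identity gives the stated closed form for $\lambda_g$.

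The inequality $\lambda_g < 3+2\sqrt{2}$ then follows by monotonicity: the larger root of $\lambda^2 + (2+\mu^2)\lambda + 1 = 0$ is a strictly increasing function of $\mu^2\in[0,4)$, $\mu_{\max}^2 < 4$ for every finite $g$, and $\mu_{\max}^2\to 4$ as $g\to\infty$, which yields the limiting quadratic $\lambda^2+6\lambda+1=0$ with larger root $3+2\sqrt{2}$. The main technical step is the block-matrix derivation of the quadratic, which becomes routine once the bipartition is built into the choice of basis; what remains is trigonometric bookkeeping to rewrite the root in the form displayed in the proposition.
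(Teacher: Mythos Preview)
Your argument is correct, and it reaches the same quadratic relation that drives the paper's proof, but by a genuinely different and more self-contained route. The paper does not compute the block form of $c=c_+c_-$ directly; instead it invokes two cited formulas---A'Campo's $\alpha_i^2=2+\mu_i+\mu_i^{-1}$ for the classical case $(\Gamma,+)$ and the Hironaka--Liechti formula $\alpha_i^2=-2-\lambda_i-\lambda_i^{-1}$ for the alternating-sign case---combines them into the relation $\lambda_i+\lambda_i^{-1}=4-\mu_i-\mu_i^{-1}$ (after the sign adjustment for the homological action), and then identifies the classical eigenvalues $\mu_i$ via the Alexander polynomial of the torus knot $T(2,2g+1)$. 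Your block-matrix elimination rederives the Hironaka--Liechti formula from scratch (your quadratic $\lambda^2+(2+\mu^2)\lambda+1=0$ is exactly $\mu^2=-2-\lambda-\lambda^{-1}$), and you bypass the torus-knot detour entirely by plugging in the standard closed form $2\cos(k\pi/(2g+1))$ for the adjacency spectrum of the path. The gain on your side is that the computation is elementary and needs no external references; the gain on the paper's side is that the intermediate relation $\lambda+\lambda^{-1}=4-\mu-\mu^{-1}$ between the alternating-sign and classical spectra is exactly what gets reused in the proof of Proposition~\ref{affinelemma}, so the longer path pays for itself later. One cosmetic point: the roots of $\lambda^2+6\lambda+1=0$ are $-3\pm2\sqrt{2}$, both negative, so in your last sentence you mean the root of larger \emph{modulus}, whose absolute value is $3+2\sqrt{2}$; this is consistent with how you phrased it earlier.
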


\begin{proof}
Let $\Gamma$ be a finite tree. By a theorem of A'Campo~\cite{AC}, the eigenvalues $\mu_i$ of the classical Coxeter transformation corresponding to $(\Gamma,+)$ are related to the 
eigenvalues $\alpha_i$ of the adjacency matrix $A(\Gamma)$ of $\Gamma$ by the equation $$\alpha_i^2 = 2 + \mu_i + \mu_i^{-1}.$$
On the other hand, Hironaka and the author showed that the eigenvalues $\lambda_i$ of the Coxeter transformation 
corresponding to the Coxeter tree $(\Gamma,\pm)$ with alternating signs are related to the eigenvalues $\alpha_i$ of $A(\Gamma)$ by the equation 
$$\alpha_i^2 = -2 -\lambda_i - \lambda_i^{-1},$$ see~\cite{HiLi}.
Combined, we get that the $\mu_i$ and the $\lambda_i$ are related by 
\begin{align}
\label{coxeterrelation}
\lambda_i + \lambda_i^{-1} = -4-\mu_i-\mu_i^{-1}.
\end{align}
The transition from the Coxeter transformation to the homological action of the associated Coxeter mapping class requires the addition of a minus sign, see~\cite{AC2, HiLi}.
We slightly abuse notation and again write $\mu_i$ and $\lambda_i$ for the eigenvalues of the homological actions of the associated Coxeter mapping classes. 
Including the necessary minus signs, the relation~(\ref{coxeterrelation}) translates to
\begin{align}
\label{relation_eq}
\lambda_i + \lambda_i^{-1} = 4 - \mu_i - \mu_i^{-1},
\end{align}
which is a relation we will make good use of.
The Coxeter mapping class corresponding to the classical Coxeter tree 
$(A_{2g},+)$ is the monodromy of the torus knot $T(2,2g+1)$, see~\cite{AC2}.
On the other hand, the characteristic polynomial of the homological action of the monodromy equals the Alexander polynomial $\Delta(t)$, see e.g.~\cite{Rolfsen}, 
which is well-known for torus knots $T(2,2g+1)$:  
$$\Delta(t) = \frac{(t^{4g+2}-1)(t-1)}{(t^{2g+1}-1)(t^2-1)}.$$
All roots of $\Delta(t)$ lie on the unit circle. 
It follows that the right side of~(\ref{relation_eq}) is real. Furthermore, it is maximised for the root of $\Delta(t)$ with smallest real part, which is 
$\mu = \xi_{4g+2}^{2g-1},$
where $\xi_{4g+2} = \exp{(\frac{2\pi i}{4g+2})}\in\Ss^1.$ 
The real part of $\mu$ is 
$$\text{Re}(\mu) = \text{cos}\left(\frac{2g-1}{2g+1}\pi\right) > -1.$$
Now let $\lambda$ be the largest root of the homological action of the Coxeter mapping class associated to the Coxeter tree $(A_{2g},\pm)$ with alternating signs. 
By~(\ref{relation_eq}), we obtain the quadratic equation $$\lambda + \lambda^{-1} = 4 - 2\text{Re}(\mu),$$ 
whose larger solution $\lambda$ is given by 
$$\lambda = 2 - \text{Re}(\mu) + \sqrt{3 -4\text{Re}(\mu) +\text{Re}(\mu)^2}<3+2\sqrt{2}.$$
Inserting the value of $\text{Re}(\mu)$ given above yields the claimed result.
\end{proof}

\section{Two dilatation bounds for Penner's construction}
\label{bounds_section}

As a product of non-negative matrices of the form $(I + R_{\alpha_i})$ and $(I + R_{\beta_j})$, the matrix product $M_\phi$ of Penner's construction described in Section~\ref{Pennersection} 
is non-negative as well.
This allows us to use Perron-Frobenius theory. 
The one standard fact we repeatedly use is the following. If a non-negative matrix $M$ is entrywise greater than or equal to another non-negative matrix $N$, written $M\ge N$, 
then the spectral radius of $M$ is greater than or equal to the spectral radius of $N$.
We directly observe that the dilatation among mapping classes arising from 
Penner's construction is minimised by products of Dehn twists such that every component of the multicurves $\alpha$ and $\beta$ gets twisted along exactly once.
Propositions~\ref{doubleintersection} and~\ref{affinelemma} give lower bounds for the dilatation of mapping classes arising from Penner's construction using certain pairs of multicurves $\alpha$ and $\beta$.   

\begin{prop}
\label{doubleintersection}
In Penner's construction,
if two components $\alpha_i$ and $\beta_j$ of the multicurves $\alpha$ and $\beta$ intersect at least twice, 
then any resulting pseudo-Anosov mapping class has dilatation greater than or equal to $3+2\sqrt{2}$.
\end{prop}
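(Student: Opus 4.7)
My plan is to isolate, inside the non-negative matrix $M_\phi$, a single $2\times 2$ non-negative block whose Perron-Frobenius eigenvalue is already at least $3+2\sqrt{2}$, and then to conclude by two standard monotonicity statements.

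First, by the reduction at the start of the section, I may assume each factor in $M_\phi$ appears exactly once. Since every factor $M_\gamma=I+R_\gamma$ is non-negative and dominates the identity entrywise, replacing every factor other than $M_{\alpha_i}$ and $M_{\beta_j}$ by $I$ yields an entrywise smaller non-negative product: $M_\phi\geq M_{\alpha_i}M_{\beta_j}$, or the reverse product $M_\phi\geq M_{\beta_j}M_{\alpha_i}$, depending on which of the two twists appears first. Both produce the same bound, so I focus on the first case.

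Next, I would exploit that $R_{\alpha_i}$ has non-zero entries only in row $i$ and $R_{\beta_j}$ only in row $n+j$, with the $(i,n+j)$ entry of the former and the $(n+j,i)$ entry of the latter both equal to $X_{ij}\geq 2$. A direct computation of the four relevant entries shows that the principal submatrix of $M_{\alpha_i}M_{\beta_j}$ indexed by $\{i,n+j\}$ is exactly
$$\begin{pmatrix} 1+X_{ij}^2 & X_{ij} \\ X_{ij} & 1 \end{pmatrix}.$$
When $X_{ij}=2$ this is $\bigl(\begin{smallmatrix}5 & 2\\ 2 & 1\end{smallmatrix}\bigr)$, whose characteristic polynomial $t^2-6t+1$ has larger root $3+2\sqrt{2}$; for $X_{ij}>2$ the characteristic polynomial is $t^2-(2+X_{ij}^2)t+1$, whose larger root is strictly greater.

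To close, I would invoke two standard Perron-Frobenius facts: the spectral radius of a non-negative matrix dominates that of any principal submatrix, and it is entrywise monotone in the non-negative cone. Chaining these with the two inequalities above yields $\rho(M_\phi)\geq 3+2\sqrt{2}$, which is the claimed dilatation bound. I do not anticipate any serious obstacle here; once one observes that a single geometric intersection entry $X_{ij}\geq 2$ already forces, after one matrix multiplication, a $2\times 2$ principal block realising the Perron-Frobenius eigenvalue $3+2\sqrt{2}$, the argument becomes essentially unavoidable.
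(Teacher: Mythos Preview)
Your proof is correct and follows essentially the same route as the paper's: drop all factors except $M_{\alpha_i}$ and $M_{\beta_j}$ using $M_\gamma\ge I$, read off the $2\times2$ block $\begin{pmatrix}1+x^2 & x\\ x & 1\end{pmatrix}$ with $x=X_{ij}\ge 2$, and invoke Perron--Frobenius monotonicity. The only cosmetic difference is that the paper bounds the full product below by the block-diagonal matrix $M(x)$, whereas you pass directly to the principal $2\times2$ submatrix; both yield the same eigenvalue $3+2\sqrt{2}$ at $x=2$. Your initial appeal to the ``each twist once'' reduction is unnecessary (and not quite what that reduction says), but harmless, since your replace-by-$I$ argument already handles arbitrary products.
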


\begin{proof}
Let $\phi$ be a pseudo-Anosov mapping class arising from Penner's construction using the multicurves $\alpha$ and $\beta$. 
Denote by $M_\phi$ the matrix product associated to $\phi$ in Penner's construction. 
Furthermore, we suppose that two components $\alpha_i$ and $\beta_j$ of $\alpha$ and $\beta$ intersect $x\ge 2$ times. 
Without loss of generality, we assume that 
$T_{\alpha_i}^+$ appears in the product $\phi$ before $T_{\beta_j}^-$.
From the definition, we directly obtain
$$M_\phi \ge (I + R_{\alpha_i})(I + R_{\beta_j}).$$
Up to a change of base (permuting the basis elements such that the first two basis elements correspond to $\alpha_i$ and $\beta_j$), we have
\begin{align*}(I + R_{\alpha_i})(I + R_{\beta_j}) &=
\begin{pmatrix}
  1 & x & \ast\\
 0 & 1 & 0\\
 0 & 0 & I_{n+m-2}
 \end{pmatrix}
 \begin{pmatrix}
  1 & 0 & 0\\
 x & 1 & \ast\\
 0 & 0 & I_{n+m-2}
 \end{pmatrix}\\
 &\ge
  \begin{pmatrix}
  1+x^2 & x & 0\\
 x & 1 & 0\\
 0 & 0 & I_{n+m-2}
 \end{pmatrix}=M(x).
\end{align*}
For $x=2$, the largest eigenvalue of $M(x)$ is exactly $3+2\sqrt{2}$. 
The statement now follows from monotonicity of the spectral radius of non-negative matrices under ``$\ge$".
\end{proof}

\begin{prop}
\label{affinelemma}
In Penner's construction, if the multicurves $\alpha$ and $\beta$ intersect with the pattern of a graph  
that contains an affine Dynkin diagram $\widetilde D_n$, $\widetilde E_6$, $\widetilde E_7$ or $\widetilde E_8$ as a subgraph,
then any resulting pseudo-Anosov mapping class $\phi$ has dilatation greater than or equal to $3+2\sqrt{2}$. 
\end{prop}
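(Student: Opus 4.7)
The plan is to prove the statement in two stages: a monotonicity reduction to the affine Dynkin subgraph $\widetilde X$, followed by a spectral computation on $\widetilde X$ itself.

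For the reduction stage, I first assume, as in the discussion preceding Proposition~\ref{doubleintersection}, that every component is twisted exactly once, since higher powers only enlarge $M_\phi$ entrywise. In addition, by Proposition~\ref{doubleintersection}, I may assume components intersect at most once, so the intersection pattern is a simple graph $\Gamma\supset\widetilde X$. The factor matrices $(I+R_\gamma)$ dominate the identity entrywise, and for non-negative matrices one has $(AB)|_S\ge A|_S\,B|_S$ for any principal submatrix. Iterating this inequality across all factors and taking $S$ to be the vertex set of $\widetilde X$, the principal submatrix of $M_\phi$ on $\widetilde X$ is entrywise bounded below by the Penner matrix $M_{\widetilde X}$ associated to the alternating-sign Coxeter mapping class on $(\widetilde X,\pm)$ with the ordering of twists inherited from $\phi$. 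Perron-Frobenius monotonicity then yields $\rho(M_\phi)\ge\rho(M_{\widetilde X})$.

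For the spectral computation, I would apply the relation~(\ref{relation_eq}) from the proof of Proposition~\ref{A2gdilatation}, which gives that the nonzero eigenvalues $\lambda$ of $M_{\widetilde X}$ and the eigenvalues $\alpha$ of the adjacency matrix $A(\widetilde X)$ are linked by $\lambda+\lambda^{-1}=6-\alpha^{2}$. Since each of $\widetilde D_n$, $\widetilde E_6$, $\widetilde E_7$, $\widetilde E_8$ is a tree, Steinberg's theorem (quoted in Section~\ref{Coxeter_section}) ensures the Coxeter transformation is unique up to conjugation, so the ordering of the reflections does not affect the spectrum. The spectral radius is therefore determined by the smallest $|\alpha|$, and it suffices to exhibit $0$ in the spectrum of $A(\widetilde X)$ for each type: for $\widetilde D_n$ one uses the two pairs of leaves at each end to build a null vector with oscillating signs along the central path; for $\widetilde E_6,\widetilde E_7,\widetilde E_8$, one constructs an explicit null vector supported on the arms with sign-changes that cancel at the trivalent vertex. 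Substituting $\alpha=0$ yields $\lambda+\lambda^{-1}=6$, hence $\lambda=3+2\sqrt{2}$.

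The main obstacle is the careful handling of the monotonicity step, since $\widetilde X$ need not be an induced subgraph of $\Gamma$ and the order of the twist factors in $\phi$ may interleave $\alpha$- and $\beta$-components. Non-inducedness is benign: extra edges in $\Gamma$ can only enlarge the relevant matrix entries, so the bound $M_\phi|_{\widetilde X}\ge M_{\widetilde X}$ still holds. The ordering issue is similarly benign because $\widetilde X$ is a tree, so the comparison matrix that arises is, up to conjugation, the canonical alternating-sign Coxeter matrix for $\widetilde X$, whose spectrum is computed by the relation above.
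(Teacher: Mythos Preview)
Your reduction step is essentially the paper's: drop the factors for curves outside the affine subtree using $(I+R_\gamma)\ge I$, pass to the principal submatrix on the vertices of $\widetilde X$, and invoke Perron--Frobenius monotonicity together with Steinberg's uniqueness for trees. This part is fine.

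The spectral computation, however, contains a genuine error. The relation you quote, $\lambda+\lambda^{-1}=6-\alpha^{2}$, is not the one derived in the proof of Proposition~\ref{A2gdilatation}. Combining the two formulae there (after the sign adjustment for the homological action) gives
\[
\lambda+\lambda^{-1}=4-\mu-\mu^{-1}=4-(2-\alpha^{2})=\alpha^{2}+2,
\]
so the spectral radius of $M_{\widetilde X}$ is governed by the \emph{largest} value of $|\alpha|$, not the smallest. A quick sanity check on $A_2$ confirms this: the adjacency eigenvalues are $\pm 1$, and the Penner matrix $\left(\begin{smallmatrix}2&1\\1&1\end{smallmatrix}\right)$ has spectral radius $(3+\sqrt 5)/2$, satisfying $\lambda+\lambda^{-1}=3=1^2+2$, not $6-1^2=5$.

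Consequently, the property of $\widetilde D_n,\widetilde E_6,\widetilde E_7,\widetilde E_8$ that you need is not that $0$ lies in the adjacency spectrum (true but irrelevant), but that the \emph{top} adjacency eigenvalue is exactly $2$; equivalently, that the classical Coxeter element has $1$ as an eigenvalue (or $-1$ for the homological action, as the paper phrases it via A'Campo's result). This is precisely the defining spectral feature of the simply laced affine diagrams. Plugging $\alpha=2$ into the correct relation gives $\lambda+\lambda^{-1}=6$ and hence $\lambda=3+2\sqrt 2$. Your argument reaches the right number only by the accident $6-0^{2}=2^{2}+2$; as written, the proof is incorrect.
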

\begin{figure}[h]
\def\svgwidth{175pt}
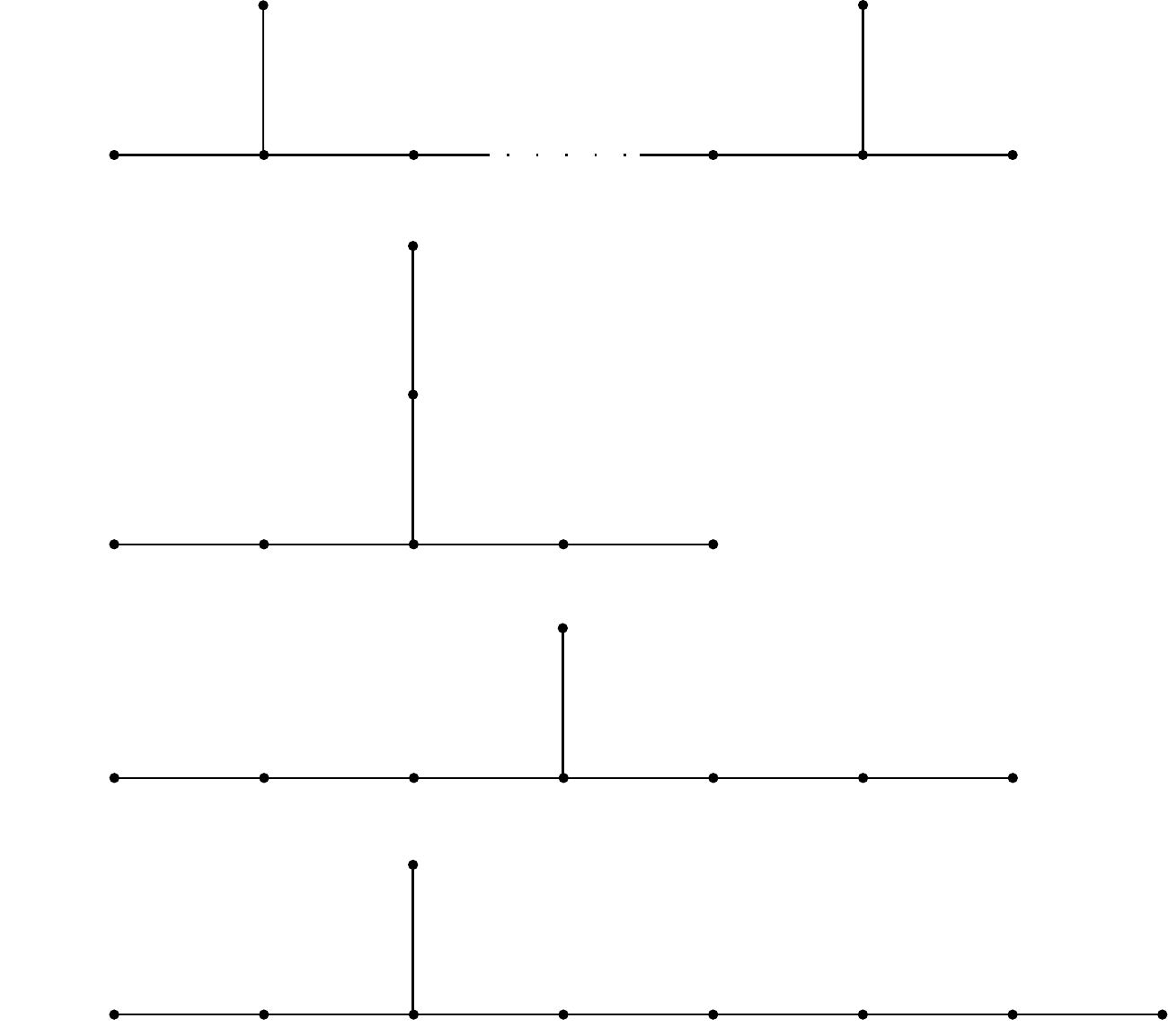
\caption{The affine Dynkin diagrams $\widetilde{D_n}$, $\widetilde{E_6}$, $\widetilde{E_7}$ and $\widetilde{E_8}$.}
\label{DEtilde}
\end{figure}
\begin{proof}
Let $\phi$ be a pseudo-Anosov mapping class arising from Penner's construction using multicurves $\alpha$ and $\beta$ that intersect with the pattern of a graph that 
contains an affine Dynkin diagram $\Gamma=\widetilde D_n, \widetilde E_6, \widetilde E_7$ or $\widetilde E_8$ as a subgraph.
As observed at the beginning of this section, we can assume 
every component of the multicurves $\alpha$ and $\beta$ to be twisted along exactly once.
Let $M_\phi$ be the corresponding matrix product described in Penner's construction. 
Furthermore, let $M_\Gamma$ be the subproduct associated to the curve components corresponding to the vertices of $\Gamma$. 
We have $M_\phi \ge M_\Gamma$. 
The spectral radius of $M_\Gamma$ is in turn an upper bound for the dilatation $\lambda$ of the alternating-sign Coxeter mapping class associated to $(\Gamma,\pm)$. 
We calculate this dilatation $\lambda$ knowing that it equals the spectral radius of the homological action and use~(\ref{relation_eq}) from the proof of Proposition~\ref{A2gdilatation}.
Since $(\Gamma,+)$ is an affine Coxeter graph, 
the homological action of the classical Coxeter mapping class associated to $(\Gamma,+)$ has all eigenvalues on the unit circle, with one eigenvalue $-1$, see~\cite{AC}. 
We obtain $\lambda+\lambda^{-1} = 6$, which yields $\lambda = 3+2\sqrt{2}$.
\end{proof}

\section{Filling pairs of multicurves}
\label{filling_section}

Let $\Sigma_g$ be an orientable closed surface of genus $g$. 
A pair of multicurves $\alpha$ and $\beta$ whose union fills $\Sigma_g$ induces a cell decomposition of $\Sigma_g$:
the $0$-cells are the intersection points of $\alpha$ and $\beta$, the $1$-cells are the connected components of $\alpha\cup\beta$ without the intersection points 
and the $2$-cells are the connected components of the complement of $\alpha\cup\beta$. 
In particular, the contribution of the $0$-cells and $1$-cells to the Euler characteristic can be directly deduced from the intersection graph of $\alpha$ and $\beta$. 
In order to know the number of $2$-cells, additional information on the framing of the curves might be necessary. 
For a pair of multicurves that intersect with the pattern of a tree, however, the number of $2$-cells does not depend on the framing and can thus be calculated directly from the tree.
For the Dynkin diagram $A_n$, the number of $2$-cells of the induced cell decomposition is two if $n$ is odd and one if $n$ is even. 
We directly deduce which closed surfaces can be filled by a pair of multicurves that intersect with the pattern of the Dynkin diagram~$A_n$.

\begin{lem}
\label{Afilling}
The union of two multicurves $\alpha$ and $\beta$ that intersect with pattern $A_{2g}$ or $A_{2g+1}$ can only fill a closed surface of genus $g$.
\end{lem}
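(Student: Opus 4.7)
The plan is to apply the Euler characteristic formula to the cell decomposition of the filled closed surface induced by $\alpha\cup\beta$, extracting the cell counts directly from the combinatorics of $A_n$ (with $n\in\{2g,2g+1\}$).

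First I would read off $V$ and $E$ from the intersection graph. Since $A_n$ is a path with $n-1$ edges and sum of vertex degrees $2(n-1)$, there are $V=n-1$ intersection points (one per edge of $A_n$) and $E=2(n-1)$ open arcs, because each curve component carrying $k$ intersection points is cut into exactly $k$ arcs and the total number of these equals $\sum_v\deg(v)$. For the face count $F$, I would invoke the observation made in the paragraph just before the lemma: since $A_n$ is a tree, the number of complementary discs is determined by the graph alone, giving $F=1$ if $n$ is even and $F=2$ if $n$ is odd.

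Plugging these into $\chi=V-E+F$, for $n=2g$ one gets $(2g-1)-2(2g-1)+1=2-2g$, and for $n=2g+1$ one gets $2g-4g+2=2-2g$. In both cases the filled closed surface has Euler characteristic $2-2g$, hence genus $g$, proving the lemma.

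The only nontrivial ingredient is the face count for $A_n$. Should one wish to verify it independently rather than import it from the preceding paragraph, I would argue by induction on $n$. The base case $A_2$ (two curves meeting at one point) fills a torus with a single complementary disc. For the inductive step, extending $A_n$ to $A_{n+1}$ means attaching one new curve component that meets an endpoint curve of $A_n$ at a single new intersection; tracing the boundary of a regular neighbourhood of $\alpha\cup\beta$ shows that this plumbing either splits one complementary disc into two or merges two into one, so $F$ flips parity with every increment of $n$, matching the claimed pattern.
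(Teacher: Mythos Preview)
Your argument is correct and is exactly the approach the paper takes: the lemma is not given a separate proof but is deduced directly from the Euler-characteristic count using the face numbers ($F=1$ for $n$ even, $F=2$ for $n$ odd) stated in the paragraph preceding the lemma; you simply spell out the arithmetic that the paper leaves implicit. One small remark on your optional inductive verification: showing that $F$ flips parity at each step is not by itself enough to pin $F$ to the set $\{1,2\}$---you also need that the plumbing at an \emph{endpoint} of the $A_n$ chain meets only one boundary circle of the existing regular neighbourhood, which is what forces the alternation $1\leftrightarrow 2$ rather than, say, $2\to 3$.
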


We proceed similarly for the Dynkin diagram $D_n$: the number of $2$-cells of the induced cell decomposition is two if $n$ is odd and three if $n$ is even. 
This yields the following result.

\begin{lem}
\label{Dfilling}
The union of two multicurves $\alpha$ and $\beta$ that intersect with pattern $D_{2g+1}$ or $D_{2g+2}$ can only fill a closed surface of genus $g$.
\end{lem}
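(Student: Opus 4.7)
The plan follows the strategy of Lemma~\ref{Afilling}: express the Euler characteristic in terms of intersection data and reduce to counting complementary two-cells. If $\alpha\cup\beta$ fills $\Sigma_g$ with pattern $D_n$, the induced cell decomposition has $V=n-1$ zero-cells (one per edge of $D_n$) and $E=2(n-1)$ one-cells (each curve of valence $d$ contributes $d$ arcs). Writing $F$ for the number of complementary discs, Euler's formula gives
\[
F = \chi(\Sigma_g)+(n-1) = n+1-2g,
\]
so the lemma reduces to showing $F=2$ when $n$ is odd and $F=3$ when $n$ is even.

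First I would argue that $F$ depends only on $D_n$ and not on the particular embedding. The ribbon-graph structure of $\alpha\cup\beta$ is specified by cyclic orderings of incident edges at each vertex of $D_n$; all such vertices have valence at most three, so only the unique trivalent vertex admits more than one cyclic ordering. Its two orderings differ by reflection, and since vertices of valence at most two are rigid, this reflection coincides with a global orientation reversal of $\Sigma_g$. The resulting filling is diffeomorphic to the original, so $F$ is an invariant of the tree $D_n$.

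To compute $F$, I would carry out a direct face-tracing argument on the four-valent graph of intersection points, using the face permutation $\sigma = \rho \circ \iota$, where $\iota$ is the edge involution and $\rho$ is the cyclic successor at each intersection. For $D_4$ this tracing yields three orbits (hence $F=3$), and for $D_5$ it yields two orbits (hence $F=2$). For larger $n$ I would proceed by induction, using that extending the long chain of $D_n$ by one vertex inserts a new intersection point and modifies $\sigma$ by a controlled composition with a four-cycle; a local analysis of the half-edges adjacent to the new intersection shows that the orbit count alternates between two and three. Combined with the base cases, this yields the claimed parity dependence.

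The main obstacle is to verify the inductive step carefully. The chain extension affects only the half-edges in a neighbourhood of the newly inserted intersection point, but the resulting change to the orbit structure of $\sigma$ depends on how the adjacent orbits are arranged in the previous step. Carrying out the case analysis and checking that the two possible framings yield the same outcome is the one nontrivial computation required; once this is settled, the Euler characteristic calculation of the first paragraph completes the proof.
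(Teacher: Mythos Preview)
Your approach is exactly the paper's: use the induced cell decomposition, reduce via Euler characteristic to determining the number $F$ of complementary discs, and invoke framing-independence for tree patterns. The paper in fact gives almost no argument here---it simply asserts that for a tree pattern $F$ is framing-independent and then states the values $F=2$ (odd $n$) and $F=3$ (even $n$)---so your proposal is a fleshed-out version of the same proof rather than a different one.

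One point to tighten: your justification of framing-independence is not quite right as stated. Reversing the orientation of $\Sigma_g$ reverses the cyclic order of half-edges at each $4$-valent \emph{intersection point} of the ribbon graph; it does not, by itself, reverse the cyclic order of intersection points along a fixed curve, which is the datum you are varying at the trivalent vertex. So the identification of the two orderings with ``global orientation reversal'' does not go through directly. The conclusion is nevertheless immediate by a cleaner observation: the two leaves attached to the trivalent vertex of $D_n$ are exchanged by an automorphism of $D_n$, and this automorphism swaps the two cyclic orders at the trivalent curve while acting trivially elsewhere. Hence the two framings yield isomorphic ribbon graphs and the same $F$. With this fix, your inductive face-tracing completes the argument and supplies exactly the details the paper leaves implicit.
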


If the intersection graph of two multicurves $\alpha$ and $\beta$ contains a cycle, 
the number of $2$-cells may well depend on the framing of the curves. 
Since the Euler characteristic of an orientable closed surface is even, 
we can still deduce the parity of the number of $2$-cells directly from the graph.
For example, we obtain that the number of $2$-cells of a cell decomposition 
induced by a pair of multicurves $\alpha$ and $\beta$ that intersect like a $2n$-cycle is even and hence at least two.
Again, we directly deduce information about the genus of a surface filled that way.
\begin{lem}
\label{cyclefilling}
The union of two multicurves $\alpha$ and $\beta$ that intersect with the pattern of a $2g$-cycle can only fill closed surfaces of genus at most $g$.
\end{lem}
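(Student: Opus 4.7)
The plan is to apply the same cell-counting strategy used for Lemmas~\ref{Afilling} and~\ref{Dfilling}, combined with the parity observation immediately preceding the statement. I would begin by noting that every vertex of the $2g$-cycle has degree two, so each of the $2g$ components of $\alpha \cup \beta$ meets the other multicurve in exactly two points. This yields $V = 2g$ zero-cells (one per edge of the cycle) and $E = 4g$ one-cells (two arcs per curve, since two intersection points divide a circle into two arcs).

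Supposing that $\alpha \cup \beta$ fills $\Sigma_{g'}$, I would then write $V - E + F = 2 - 2g'$, which rearranges to $F = 2g - 2g' + 2$. Since $V - E = -2g$ is even and $\chi(\Sigma_{g'}) = 2 - 2g'$ is even, $F$ is forced to be even. The complement of $\alpha \cup \beta$ is a non-empty disjoint union of discs (because the union fills), so $F \ge 1$, and the parity constraint promotes this to $F \ge 2$. Substituting back into $F = 2g - 2g' + 2$ gives $g - g' + 1 \ge 1$, i.e., $g' \le g$, as claimed.

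I do not foresee any serious obstacle. The single key input is that the cycle has even length $2g$, which forces $V - E$ to be even and hence rules out odd values of $F$. The framing ambiguity flagged in the paragraph just before the lemma is precisely what the parity argument sidesteps: one does not need to pin down the exact number of $2$-cells, only to show it is at least two.
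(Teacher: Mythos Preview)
Your argument is correct and is exactly the approach the paper takes: the paragraph immediately preceding the lemma already carries out this parity-based Euler characteristic computation, concluding that the number of $2$-cells is even and hence at least two. Your write-up simply makes the cell counts $V=2g$, $E=4g$ and the inequality $F=2g-2g'+2\ge 2$ explicit.
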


\subsection{Proof of Theorem~\ref{minimaldilatation}}
Let $\Sigma_g$ be an orientable closed surface of genus~$g$. As we have seen in Proposition~\ref{A2gdilatation}, there exists a mapping class $\phi_g$ on $\Sigma_g$
that arises via Penner's construction and has dilatation $\lambda_g < 3+2\sqrt{2}$. Hence, if we want to find the minimal dilatation of mapping classes on $\Sigma_g$ arising from Penner's construction, 
we can discard all pairs of multicurves $\alpha$ and $\beta$ that always yield dilatations greater than or equal to $3+2\sqrt{2}$. 
By Proposition~\ref{doubleintersection}, this excludes pairs of multicurves $\alpha$ and $\beta$ with components $\alpha_i$ and $\beta_j$ that intersect more than once. 
Furthermore, by Proposition~\ref{affinelemma}, this rules out pairs of multicurves $\alpha$ and $\beta$ that intersect with the pattern of a graph that contains
an affine Dynkin diagram $\widetilde D_n, \widetilde E_6, \widetilde E_7$ or $\widetilde E_8$ as a subgraph.
In Example~\ref{4-cycle}, we have seen that the spectral radius of any Coxeter transformation associated to the $4$-cycle with alternating signs is greater than or equal to $3+2\sqrt{2}$.
With the same reasoning as in the proof of Proposition~\ref{affinelemma}, pairs of multicurves $\alpha$ and $\beta$ that intersect with the pattern of a graph that contains 
a $4$-cycle as a subgraph can be disregarded.

\begin{figure}[h]
\def\svgwidth{70pt}
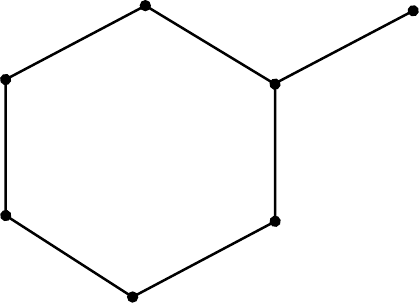
\caption{The enriched $6$-cycle.}
\label{6erMIT}
\end{figure}

The only intersection patterns of a pair of multicurves $\alpha$ and $\beta$ we still have to consider 
are the Dynkin diagrams $A_n$, $D_n$, $E_6$, $E_7$, $E_8$, the $2n$-cycle and the enriched $6$-cycle, depicted in Figure~\ref{6erMIT}.
By Lemma~\ref{Afilling}, Lemma~\ref{Dfilling} and Lemma~\ref{cyclefilling}, every $A_n$, $D_n$ and $2n$-cycle encoding the intersections of a pair of multicurves $\alpha$ and $\beta$ that fill $\Sigma_g$
contains $A_{2g}$ as a subgraph. In particular, the dilatations arising via Penner's construction using these 
multicurves are greater than or equal to the dilatation of the Coxeter mapping class $\phi_g$ associated to the Coxeter tree $(A_{2g}, \pm)$ with alternating signs. 
This proves Theorem~\ref{minimaldilatation} for $g\ge 5$, 
since the union of two multicurves $\alpha$ and $\beta$ intersecting with the pattern of $E_6$, $E_7$, $E_8$ or the enriched $6$-cycle can 
only fill a surface of genus~$g\le4$. The only thing left to deal with are these exceptional four graphs.
We note that the enriched $6$-cycle contains $E_7$ as a subgraph. 
Thus, the dilatation of any Coxeter mapping class associated to the enriched $6$-cycle with alternating signs is 
greater than or equal to the dilatation of the Coxeter mapping class associated to $(E_7,\pm)$. 
For these remaining four graphs, we simply calculate the dilatation of their associated alternating-sign Coxeter mapping classes
and compare them to the dilatation of the Coxeter mapping classes $\phi_g$ associated to $(A_{2g}, \pm)$. 
Table~1 sums up the situation.
\begin{table}[h]
\begin{tabular}{| c | c | c |}
\hline
graph & genus of surface filled & dilatation \\ \hline
$A_6$ & $3$ & $\approx 5.049$ \\ \hline
$A_8$ & $4$ & $\approx 5.345$  \\ \hline
$E_6$ & $3$ & $\approx 5.552$ \\ \hline
$E_7$ & $3$ & $\approx 5.704$ \\ \hline
$E_8$ & $4$ & $\approx 5.783$ \\ \hline
enriched $6$-cycle & $\le4$ & $> 5.7$\\ \hline
\end{tabular}
\smallskip
\caption{}
\end{table}
It is apparent that the Coxeter mapping classes associated to the Coxeter graphs $(A_{2g},\pm)$ with alternating signs minimise the dilatation 
also for closed surfaces of genus $g\le 4$. This completes the proof of Theorem~\ref{minimaldilatation}.

\section{Surfaces with punctures}
\label{punctures}
Let $\lambda_{g,p}$ be the minimal dilatation among mapping classes arising from Penner's construction for an orientable surface $\Sigma_{g,p}$ of genus $g$ with $p$ punctures. 
Up to now, we have determined $\lambda_{g,0}$. 
We remark that our proof works exactly the same for $\lambda_{g,1}$. 
This yields $\lambda_{g,0} = \lambda_{g,1}.$ 
If the number of punctures is small, say $p\le4$, it is conceivable that adjustments to our argument could be made, 
revealing alternating-sign Coxeter mapping classes associated to $A_n$, $D_n$ or the $2n$-cycle to minimise dilatation among mapping classes arising from Penner's construction. 
However, if the number of punctures increases, our examples cannot fill $\Sigma_{g,p}$ any longer and it seems that dilatations should become greater than $3+2\sqrt{2}$. 
In particular, our simplifications in the form of Propositions~\ref{doubleintersection} and~\ref{affinelemma} fail and many more cases would have to be considered: 
intersection patterns with affine subgraphs, loops, multiple edges and additional information on the framing of the corresponding curves. 

\begin{remark}
\label{finite}
\emph{The numbers $\lambda_{g,p}$ are bounded from above by a constant that does not depend on $g$ and $p$. 
Figure~\ref{7punctures} depicts two multicurves $\alpha$ and $\beta$ that intersect minimally and fill a sphere with eight punctures. 
Analogous examples can be constructed for any number of punctures.
\begin{figure}[h]
\def\svgwidth{120pt}
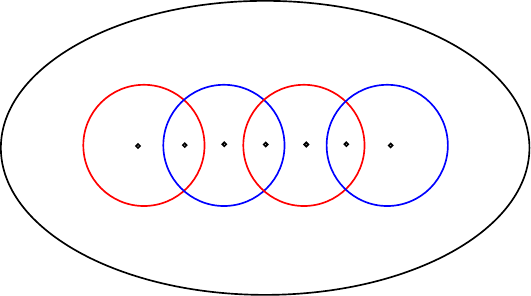
\caption{}
\label{7punctures}
\end{figure}
Furthermore, we can combine this example with our minimising examples for closed surfaces, as depicted in Figure~\ref{g3},
such that the components of the multicurves still intersect along a path.
Let $\phi$ be the product of Dehn twists along the components of the multicurves given by the bipartite order, with alternating signs. 
Since the multicurves intersect with the pattern of a path and each pair of components intersects at most twice,
it is a direct observation 
that there exists a constant $c$ such that every row sum of the matrix product $M_\phi$ of Penner's construction is bounded from above by $c$.
In particular, the Perron-Frobenius eigenvalue of $M_\phi$ and thus also the dilatation of $\phi$ is bounded from above by $c$.  
}\end{remark}

If $g$ is large compared to $p$, filling multicurves $\alpha$ and $\beta$ as in Remark~\ref{finite}, but with additional properties can be found. 
For example, multicurves $\alpha$ and $\beta$ with only single intersections among components. 
Furthermore, $\alpha$ and $\beta$ can be chosen to intersect with the pattern of a tree with vertices of degree at most three. 
This results in a smaller constant than $c$ from Remark~\ref{finite} bounding the row sums of the matrix product $M_\phi$ corresponding to the associated alternating-sign Coxeter mapping class $\phi$. 
\begin{remark}
\label{non-increasing}
\emph{
In general, for a fixed $p$, the function $\lambda_{g,p}$ is not increasing in $g$. Indeed, we have $$\lambda_{0,3}=3+2\sqrt{2}>\lambda_{1,3}.$$
The equality on the left follows from the following observation. If two simple closed curves on the sphere with three punctures intersect, then they intersect at least twice by the Jordan curve theorem. 
In particular, on the sphere with three punctures, pseudo-Anosov mapping classes arising from Penner's construction have $\lambda\ge3+2\sqrt{2}$ by Proposition~\ref{doubleintersection}.
On the other hand, it is possible to fill the sphere with three punctures by two simple closed curves intersecting exactly twice. The product of two Dehn twists along these curves realises $\lambda_{0,3}=3+2\sqrt{2}$.
The inequality on the right follows from 
the fact that the torus with three punctures can be filled by a pair of multicurves that intersect with the pattern of the Dynkin diagram $D_4$,
whose associated alternating-sign Coxeter transformation has spectral radius strictly smaller than $3+2\sqrt{2}$.
}\end{remark}
Even though $\lambda_{g,p}$ is not always increasing in $g$, it might be so for $g$ large enough compared to $p$. 
In this case, one could ask about the limit of $\lambda_{g,p}$ for a fixed $p$, as $g\to +\infty$. These are two specific instances of the following, more broadly formulated question. 

\begin{que}
For a fixed $p>1$, what is the asymptotic behaviour of $\lambda_{g,p}$?
\end{que}

\end{document}